\numberwithin{equation}{section}
\newtheorem{theorem}{Theorem}
\newtheorem{proposition}{Proposition}
\newtheorem{definition}{Definition}
\newcommand{\N}{\mathbb{N}}
\newcommand{\Q}{\mathbb{Q}}
\newcommand{\R}{\mathbb{R}}
\newcommand\astr{{{}^\ast\hspace*{-.5pt}\R}}
\newcommand\asta{{{}^\ast\hspace*{-3.5pt}A}}
\newcommand{\astb}{{}^{\ast}\hspace*{-2.5pt}B}
\newcommand{\power}{\mathcal{P}}
\newcommand{\ns}[1]{{}^\ast\hspace{-1pt}#1}
\newcommand{\civita}{\mathcal{R}}
\newcommand{\lang}{\mathcal{L}}
\newcommand{\m}{m}
\newcommand{\leb}{\mu}
\newcommand{\mode}{\hspace{-8pt}\mod}
\newcommand{\reg}{\textbf{Reg}}
\newcommand{\sy}{\textbf{Sy\hskip-.7pt}}
\newcommand{\un}{\textbf{U\hskip-1.2pt n\hskip.2pt}}
\newcommand{\co}{\textbf{Co}}
\newcommand{\tot}{\textbf{Tot}}
\newcommand{\olim}{\lim_{\lambda \uparrow \Omega}}
\newcommand\astf{{{}^{\ast}\hspace*{-3pt}f}}
\title [Internality, transfer, and infinitesimal modeling]
{Internality, transfer, and infinitesimal modeling of infinite
processes}
\author{Emanuele Bottazzi}
\address{E.~Bottazzi\\
	Department of Civil Engineering and Architecture\\
	University of Pavia\\
	Via Adolfo Ferrata 3, 27100 Pavia, Italy, orcid 0000-0001-9680-9549}
\email{emanuele.bottazzi@unipv.it, emanuele.bottazzi.phd@gmail.com}
\author{Mikhail G. Katz}
\address{M.~Katz\\
	Department of Mathematics\\
	Bar Ilan University\\
	Ramat Gan 5290002 Israel, orcid 0000-0002-3489-0158}
\email{katzmik@macs.biu.ac.il}
\date{\today}
\begin{document}


\begin{abstract}
A probability model is underdetermined when there is no rational
reason to assign a particular infinitesimal value as the probability
of single events.  Pruss claims that hyperreal probabilities are
underdetermined.  The claim is based upon external hyperreal-valued
measures.  We show
that internal hyperfinite measures are not underdetermined.  The
importance of internality stems from the fact that Robinson's transfer
principle only
applies to internal entities.  We also evaluate the claim that
transferless ordered fields (surreals, Levi-Civita field, Laurent
series) may have 
advantages over hyperreals in probabilistic modeling.  We show that
probabilities developed over such fields are less expressive than
hyperreal probabilities.
\end{abstract}

\subjclass[2010]{Primary 03H05; Secondary 03H10, 
00A30, 
26E30, 
26E35, 
28E05, 
60A05, 
01A65
}

\keywords{Infinitesimals; hyperreals; hyperfinite measures; internal
entities; probability; regularity; axiom of choice; saturated models;
underdetermination; non-Archi\-medean fields}

\maketitle
\tableofcontents

\section{Introduction}

Alexander Pruss (AP) claims in \cite{Pr18} that hyperreal
probabilities are underdetermined, meaning that, given a model, there
is no rational reason to assign a particular infinitesimal value as
the probability of a single event.  AP's underdetermination claim
hinges upon the following:
\begin{enumerate}
\item
examples of uniform processes that allegedly do not allow for a
uniquely defined infinitesimal probability for singletons, and
\item
a pair of theorems asserting that for every hyperreal-valued
probability measure there exist uncountably many others that induce
the same decision-theoretic preferences.
\end{enumerate}
In \cite{BK20a} we presented our main arguments highlighting some
hidden biases in the representation of some infinite processes such as
lotteries, coin tosses, and other infinite processes.

In Section~\ref{s24} we analyze some Archimedean and non-Archimedean
models of Prussian spinners (rotating pointers).  In particular, we
will show that hyperfinite models of the spinners are not
underdetermined.  Moreover, we will discuss additional constraints
that may narrow down the choice of infinitesimal probabilities, as
proposed by AP himself in \cite{Pr18}, Section\;3.3 as well as 3.5
(``Other putative constraints'').

We show that the additional probabilities introduced in AP's theorems
are all external; see Section \ref{sec underdetermination}.  Since
external functions do not obey the transfer principle of Robinson's
framework, these additional measures are inferior to internal ones
when it comes to modeling using hyperreal fields.  Thus AP's external
measures are \emph{parasitic} in the sense of Clendinnen (\cite{Cl89},
1989),%
\footnote{\label{f5}See \cite{BK20a}, note 2.}
and fail to establish underdetermination.  In the light of our
analysis, AP's theorems can be framed as a warning against the use of
external probabilities that do not satisfy the transfer principle.
Thus, among the additional constraints that, in AP's words, ``may help
narrow down the choice of infinitesimal probabilities''
(\cite[Section\;3.3]{Pr18}), the first choice should be the
\emph{internal} constraint; see Sections~\ref{s45} and \ref{notunder}
as well as \cite{BK20a}, Section\;3.2.  AP speculates that
\begin{quote}
[An] approach in terms of imprecise probabilities, where a
\emph{family} of hyperreal-valued probability functions is assigned as
in Sect.\;6.1.2 of Benci et al.\;(2018), may well be able to escape
the underdetermination worries.  (\cite[Section\;1]{Pr18}; emphasis in
the original)
\end{quote}
AP's wording implies that a \emph{single} hyperreal-valued probability
is unable to escape such `worries.'  However, we show that a single,
internal hyperreal-valued probability does escape Prussian worries.

AP also speculated that other non-Archimedean fields may bestow
advantages as compared to Robinson's framework.  In Section \ref{s6}
we argue that, contrary to AP's speculation, infinitesimal
probabilities in other familiar non-Archimedean extensions of the real
numbers are less expressive than real-valued probabilities.  This is
due to the absence of a uniform way of extending most real functions
to these fields so as to preserve their elementary properties, and to
the limitations of the measure theory currently developed for these
fields.  From the viewpoint of Robinson's framework, such limitations
are due to the unavailability of a transfer principle for these
fields.

\section{Archimedean and otherwise models of Pruss spinners} 
\label{sec spinners}
\label{s24}

We analyze AP's non-Archimedean models of the spinners, and show how
hyperfinite non-Archimedean models avoid the claimed
underdetermination problem.  Starting from some of these hyperfinite
models, it is also possible to define an infinitesimal probability
with a standard sample space and taking values in a field of
hyperreals~$\astr$, thus addressing AP's original model (see Section
\ref{s45}).  In this section we examine one particular example of the
failure of AP's underdetermination charge for hyperreal probabilities.
Our rebuttal of his claim that \emph{every} non-Archimedean
probability is underdetermined appears in Section~\ref{sec
underdetermination}.

AP provides the following example (see \cite[Section\;3]{Pr18}) of a
pair of uniform processes with a common sample space:
\begin{enumerate}
\item the first process is a spinner that is designed to stop
uniformly at an angle~$\theta \in [0,360)$;
\item the second process is a spinner that is designed to stop
uniformly at an angle~$\theta \in [0,360)$, but once it stops, the
angle obtained is doubled.
\end{enumerate}

AP then shows that the outcome of both spinners can be represented
by a uniform probability over the real interval~$[0,360)$.  Having
introduced this Archimedean model, AP alleges that the description
of the spinners by means of a hyperreal-valued probability depends
upon the underlying mechanism governing each spinner.  He concludes
that
\begin{quote}
we cannot simply specify an infinitesimal [probability] by saying that
it is whatever is the probability of a uniform process hitting a
particular point. For what that probability is -- assuming it is
infinitesimal and not zero -- depends on details of the process that
go beyond uniformity. \cite[Section\;3.2]{Pr18}
\end{quote}
To refute AP's claim, we will show that his conclusion is only due to
inappropriate choices he makes in the non-Archimedean description of
the uniform process, and that indeed appropriate non-Archimedean
models do not depend upon irrelevant details.

We observe that AP appears to extrapolate to every
non-Archi\-me\-dean model some properties of the continuous model
obtained from the Lebesgue measure.  For instance, he assumes that in
every infinitesimal description of the spinners, the sample space~$S$
must have the property that if~$x \in S$, then also~$\frac{x}{2}\in
S$.  This property is satisfied if~$S$ is an interval of~$\Q$ or
of~$\R$, but the spinners can also be described by means of a
hyperfinite set~$S_H$ that is discrete in an appropriate
sense.%
\footnote{We have already argued that a given physical process can be
given distinct mathematical representations; see \cite{BK20a},
Section\;2.3.  Thus we reject the claim that changing the sample
space, as we have proposed here, changes also the underlying process.}
The representations of the uniform processes by means of a hyperfinite
sample space~$S_H$ enable us to specify a unique uniform probability
measure over~$S_H$ that is also regular (see also Section \ref{sec
hyperfinite spinners}). Thus for such hyperreal probabilities no
underdetermination occurs.  In Sections\;\ref{s42} and \ref{s53} we
argue that this result applies also to non-Archimedean probabilities
that satisfy the~$\Omega$-limit axioms of Benci et al., since these
probabilities can be obtained as the restriction of suitable
hyperfinite measures.


Consequently, in his intuitive arguments for his underdetermination
claim AP relies on additional hypotheses on the non-Archimedean
models 
that are not satisfied by many relevant hyperreal probabilities.

In Archimedean mathematics, the two spinners discussed by AP can be
described by a family of discrete models and by a continuous model, as
we explain in Sections~\ref{sec archimedean spinners} and \ref{sec
archimedean spinners 2}.  In Sections\;\ref{sec hyperfinite spinners}
and \ref{s44} we show how internal hyperfinite models of AP's spinners
avoid the alleged drawbacks.  Finally, in Section\;\ref{s45}, we show
how properly defined hyperfinite measures can approximate to varying
degrees other properties of the models based upon the Cantor--Dedekind
representation of the continuum.

\subsection{Discrete models of the spinners}
\label{sec archimedean spinners}

The discrete models for the first spinner can be obtained as
follows. Let~$n\in\N$ and define
\[
S_n=\big\{0,\ldots,\tfrac{k\pi}{n},\ldots,\tfrac{(2n-1)\pi}{n}\big\}
\subseteq [0,2\pi).
\]
One can imagine the set~$S_n$ as representing the points on the circle
obtained from rotations by integer multiples of the
angle~$\frac{\pi}{n}$.  A uniform probability over~$S_n$ is given
by~$P_n(A)=\frac{|A|}{|S_n|}=\frac{|A|}{2n}$. The probability measure
$P_n$ has the following four properties:
\begin{itemize}
	\item[\tot]~$P_n$ is total, in the sense that it is defined on
	the powerset~$\power(S_n)$ of the sample space~$S_n$;
	\item[\un$_n$]~$P_n$ is uniform in the sense that for every~$A, B \subseteq S_n$ with~$|A|=|B|$, one has also~$P_n(A)=P_n(B)$;
	\item[\reg]~$P_n$ is regular;%
\footnote{\label{footnote regular}By \emph{regular} we mean that the
measure~$P_n$ assigns probability~$0$ only to the empty event. This
should not be confused with the notion of \emph{regular measure},
i.e. of a measure~$\nu$ such that~$\nu(A) = \sup\{ \nu(F) \colon F
\subseteq A$ is compact and measurable$\} = \inf\{\nu(G)\colon G
\supseteq A$ is open and measurable$\}$.}
	\item[$\neg$\sy$_{\R}$]~$P_n$ is not symmetric with respect to
	rotations by an arbitrary real angle, since if~$\theta\in\R$
	in general~$\frac{k\pi}{n}+\theta \mod 2\pi \not \in S_n$.
\end{itemize}
In addition, as a consequence of property \un$_n$, the measure~$P_n$
satisfies the following discrete symmetry condition:
\smallskip
\begin{itemize}
	\item [\sy$_n$]~$P_n$ symmetric with respect to rotations by
	multiples of~$\frac{\pi}{n}$, i.e.\ if~$0 \leq \theta \leq
	2n-1$ is an integer, then
\[
P\big(\big\{\tfrac{k_1\pi}{n},\ldots,\tfrac{k_j\pi}{n}\big\}\big)
=P\left(\left\{ \tfrac{k_1\pi}{n}+\theta\tfrac{\pi}{n} \mode
2\pi,\ldots,\tfrac{k_j\pi}{n}+\theta\tfrac{\pi}{n}\mode2\pi\right\}\right).
\]
\end{itemize}
\smallskip

The discrete models~$Q_n$ for the second spinner can be obtained from
the measure~$P_{2n}$ as follows:
\[
Q_n(\{x\})
=P_{2n}(\{\tfrac{x}{2}\})+P_{2n}(\{\tfrac{x}{2}+\pi\})=2\tfrac{1}{4n}=\tfrac{1}{2n}=P_n(\{x\})
\]
for all~$x \in S_n$.  From the previous equation it is readily seen
that~$Q_n$ agrees with~$P_n$ over~$S_n$, even though it is obtained
from another description of the uniform process.  As a
consequence,~$Q_n$ has the properties \un$_n$, \reg,~$\neg$\sy$_{\R}$
and \sy$_n$ already discussed for~$P_n$.

As already observed by Bascelli et al.\;(\cite{14a}, 2014), with a
choice of some sufficiently large~$n$, the models~$P_n$ and~$Q_n$
might already be sufficient for many practical purposes, since
\begin{quote}
all physical quantities can be entirely parametrized by the usual
rational numbers alone, due to the intrinsic limits of our capability
to measure physical quantities. \cite[p.\;853]{14a}
\end{quote}
This position is shared by Herzberg \cite[Section 1]{He07}.
Nevertheless, we agree with the common mathematical practice that it
is more convenient to introduce some level of idealisation for the
description of~$P_n$ and of~$Q_n$, especially as~$n$ becomes very
large.

\subsection{Spinner models based on the Lebesgue measure}
\label{sec archimedean spinners 2}

A typical idealisation is the use of continuous models based upon the
real numbers.  A continuous model for the first spinner possessing the
following property, \un, of uniformity:
\begin{enumerate}
	\item[\un] if two intervals have the same length, then they
	have the same probability
\end{enumerate}
\smallskip
is obtained by using the Lebesgue measure~$\mu$ over the real interval
$[0, 2\pi)$.  In this model, events are the measurable subsets of
$[0,2\pi)$ and the probability of an event~$A$ is defined as
$P_{\leb}(A)=\frac{\leb(A)}{\leb([0,2\pi))}=\frac{\leb(A)}{2\pi}$.

Notice also that the choice of~$[0, 2\pi)$ as the sample space is
arbitrary.  One could equally well measure angles with 
%
%
arc degrees instead of radians.  In this case, the sample space would
be the real interval~$[0,360)$ and the probability of an event~$A$
would be defined as~$P_{360}(A) =
\frac{\leb(A)}{\leb([0,360))}=\frac{\leb(A)}{360}$.  Despite the
arbitrariness of the choice of the sample space, one obtains the
compatibility conditions
\begin{equation}\label{compatibility 1}
	P_{360}(A) = P_{\leb}\left(\left\{x \in [0,2\pi) \colon
	\tfrac{360}{2\pi} x \in A \right\}\right)
\end{equation}
for all measurable sets~$A \subseteq [0,360)$, and 
\begin{equation}\label{compatibility 2}
P_{\leb}(B) = P_{360}\left(\left\{x \in [0,360) \colon
\tfrac{2\pi}{360} x \in B \right\}\right)
\end{equation}
for all measurable sets~$B \subseteq [0,2\pi)$.  From now on, we will
refer mostly to the probability measure~$P_{\leb}$, but the discussion
remains valid also for~$P_{360}$.  The probability measure~$P_{\leb}$
has different properties from its discrete counterparts~$P_n$.
Namely, it has the following three properties:
\begin{itemize}
	\item[\un]~$P_{\leb}$ is uniform;
	\item[$\neg$\reg]~$P_{\leb}$ is not regular;
	\item[\sy$_\R$]~$P_{\leb}$ is symmetric with respect to
	rotations by an arbitrary real angle.%
\footnote{Observe that we have not taken a position on whether the
continuous models satisfy {\tot} or\,~$\neg$\tot: if one rejects the
full Axiom of Choice, then the Lebesgue measure can be total; see
Solovay (\cite{So70}, 1970).  However, by assuming a sufficiently
strong choice principle it is possible to prove that there are sets
that are not Lebesgue measurable, so that~$P_{\leb}$ would not be
total. Recall also that without some weak form of the Axiom of Choice
it is not possible to prove that the Lebesgue measure is countably
additive. For more details, we refer to Fremlin (\cite{fremlin}, 2008)
and to Bottazzi et al.\;(\cite{19c}, 2019, Section 5).}
\end{itemize}
Notice also that for all~$n \in \N$, and for all~$h < k \leq 2n$, we
have
\[
P_{\leb}\left( \left[ \tfrac{h\pi}{n}, \tfrac{k\pi}{n}\right)\right)=
P_{n}\left(\left\{\tfrac{h\pi}{n},\ldots,\tfrac{(k-1)\pi}{n}\right\}\right).
\]
The previous equality can be interpreted as a coherence
between~$P_{\leb}$ and~$P_{n}$ over arcs with endpoints in~$S_n$.

Similarly, the second spinner can be modeled by using the Lebesgue
measure.  The probability of an event~$A$ is
\begin{align*}
	Q_{\leb}(A)&=\frac{\leb(\{x\colon 2x \mode 2\pi\in
	A\})}{\leb([0,2\pi))}\\ &=\frac{\leb(\{x/2\colon x\in
	A\})}{2\pi} + \frac{\leb(\{x/2+\pi\colon x\in A\})}{2\pi}.
\end{align*}
Since the Lebesgue measure is translation invariant, we have the
equality
$$
\frac{\leb(\{x/2\colon x\in A\})}{2\pi} = \frac{\leb(\{x/2+\pi\colon x\in A\})}{2\pi},
$$
from which we deduce
$$
Q_{\leb}(A) = 2\frac{\leb(\{x/2\colon x\in A\})}{2\pi} = 2P_{\mu}(\{x/2\colon x\in A\})=P_{\leb}(A).
$$

The discrete models and the continuous model capture different aspects
of the spinners. The choice of which model to use in specific
circumstances ought to reflect the relevant properties of each
situation.  For instance, one should take into account that, as is well
known, the properties \un, \reg\ and \sy$_\R$ cannot be
simultaneously satisfied by any real-valued measure defined over an infinite set~$\Omega$.
This imposes
some limitations on the scope of~$P$.  For instance, it is not
possible to define from~$P$ a conditional probability with respect to
every measurable event~$B$.  This occurs since 
%
%
the common definition of conditional probability 
%
%
$P(A|B) = \frac{P(A \cap B)}{P(B)}$ requires~$B$ to have positive
measure.

\subsection
{Hyperfinite models of the spinners}
\label{sec hyperfinite spinners}

The continuous model based upon the Lebesgue measure is not the only
possible idealisation of the two discrete models for the spinners.  By
using hyperfinite representations, it is possible to define models
that satisfy \tot\ and \reg, and that approximate to varying degrees
the properties \un\ and \sy$_\R$.

A simple model of the first spinner is obtained by taking as sample
space the set~$S_n$ for some infinitely large hypernatural~$n$.%
\footnote{Such models exist thanks to the transfer principle of
Robinson's framework. Moreover, transfer ensures that the properties
of the finite models discussed in Section\;\ref{sec archimedean
spinners} are shared by the hyperfinite models presented here.}
The corresponding probability measure~$P_n$ has the properties \tot,
\un$_n$, \reg,~$\neg$\sy$_\R$, \sy$_n$ described for the discrete
models of the spinner.%
\footnote{Property \tot\ should be interpreted as follows:~$P_n$ is
defined on every \emph{internal} set~$A \subseteq S_n$.  Similarly,
property \un$_n$ should be interpreted as follows: for every
\emph{internal}~$A, B \subseteq S_n$ with~$|A| = |B|$, one has also
$P_n(A) = P_n(B)$. The symbol~$| A |$ denotes the element of~$\ns{\N}$
corresponding to the \emph{internal cardinality} of the internal set
$A$.}
If~$n=m!$ for some infinite hypernatural~$m$, then \sy$_n$ implies
\sy$_\Q$, that is symmetry with respect to rotations by any rational
angle. As observed for instance by Benci et al.\ \cite[p.\;5]{bbd},
$P_n$ cannot satisfy the properties \un\ and \sy$_\R$; however it has
the coherence property
\begin{quote}
\textbf{\co}\;~$P_n(\asta\cap S_n)\approx P_{\mu}(A)$ for each
measurable~\hbox{$A\subseteq[0,2\pi)$},%
\footnote{Here~$\approx$ denotes the relation of infinite proximity,
i.e., the relation of being infinitely close.}
\end{quote}
that implies the weak form of uniformity
\[
P_n([a,b)) \approx P_n([c,d)) \text{ whenever } b-a = d-c,
\]
and the weak form of symmetry
\[
P_n(\asta\cap S_n) \approx P_n(\ns\hskip-1pt{\{x+\theta \mode 2\pi\colon x \in
A\}}\cap S_n)
\]
for every measurable~$A \subseteq [0,2\pi)$ and for every real~$\theta
\in [0,2\pi)$.

As already discussed in the Archimedean discrete models, the second
spinner can be represented by the hyperfinite probability~$Q_n$ for an
infinite hypernatural~$n$.  This probability measure shares the
properties \tot, \un$_n$,~$\neg$\sy$_\R$, \sy$_n$ and \co\ with the
first spinner; moreover one has~$Q_n(\{x\})=P_n(\{x\})$, as in the
Archimedean models.

Recall that the continuous description of the spinners based upon the
real Lebesgue measure could be formulated with different choices of
the sample space (e.g., the interval~$[0,2\pi)$ or the interval
$[0,360)$) but it is independent of such choices in the sense
expressed by equations\;\eqref{compatibility 1} and
\eqref{compatibility 2}.  For the hyperfinite measures~$P_n$ we have a
similar property, expressed by the following compatibility condition:
whenever~$\gcd(m,n) = \min\{m,n\} = n$, we have
\begin{equation}
\label{ns compatibility 1}
P_{n}(A) = P_{m}\left(\left\{x \in S_m\colon \tfrac{m}{n} x \mode 2\pi
\in A \right\}\right)
\end{equation}
for all~$A \subseteq S_n$. 
%
%

Equality \eqref{ns compatibility 1} 
%
%
is the discrete counterpart of equations \eqref{compatibility 1} and
\eqref{compatibility 2}.  The main differences between the continuous
equations and the hyperfinite one is that in the real case there is a
bijection between the sets~$[0,2\pi)$ and~$[0,360)$ given by the map
$x \mapsto \frac{360}{2\pi}x$, while in the discrete case (either
finite or hyperfinite) there is no bijection between the sets~$S_n$
and~$S_m$ unless~$n=m$.%
\footnote{We remark that there is no \emph{internal} bijection between
$S_n$ and~$S_m$. If one drops the requirement that the bijection must
be internal, then it is possible to find external bijections
witnessing that the external cardinality of~$S_n$ is equal to the
external cardinality of~$S_m$ whenever both~$n$ and~$m$ are
infinite. However, the existence of external bijections between~$S_n$
and~$S_m$ has no bearing our argument, that is instead based upon the
internal cardinality.  For a discussion on the limited relevance of
external objects and functions in hyperreal models see
Section~\ref{sec underdetermination} and note\;\ref{foot37}, as well
as \cite{BK20a}, Sections 3.2 and 3.4.}
On the other hand, if~$\gcd(m,n) = \min\{m,n\} = n$, then there is a
$\frac{m}{n}$-to-one correspondence between~$S_m$ and~$S_n$; together
with properties \un$_n$ and \un$_m$ of the measures~$P_n$ and~$P_m$,
this is sufficient to entail the relation~\eqref{ns compatibility 1}.

\subsection
{Hidden assumptions on infinitesimal models of spinners}
\label{s44}

In his Section\;3.2, AP seems to suggest that in Robinson's framework
every pair of probability measures~$P$ and~$Q$ for the first and the
second spinner, respectively, satisfy the following conditions:
\begin{enumerate}
[label={(Pr\,\theenumi)}]
\item 
\label{l1}
$P$ and~$Q$ have the same sample space;
\item
\label{l2}
$P$ and~$Q$ should assign the same probability to singletons;
\item
\label{l3}
$Q(\{x\})=2P(\{x/2\})$.
\end{enumerate}
From properties \ref{l2} and \ref{l3} AP obtains his \emph{underdetermination} claim \cite[Section\;3.2]{Pr18}.

However, as we have seen in the discussion of the discrete models, if
one wishes to retain property \ref{l1} in a hyperfinite setting,
%
%
the requirement \ref{l3} must be replaced by a subtler condition.  In
particular, for the hyperfinite probabilities~$P_n$ and~$Q_n$ defined
on~$S_n$ we have seen that~$Q_n$ should be defined in terms of
$P_{2n}$ as follows:
\begin{equation}
\tag{$3'$}
\label{3prime}
Q_n(\{x\}) = P_{2n}(\{\tfrac{x}{2}\}) +
	P_{2n}(\{\tfrac{x}{2}+\pi\ \ \mode 2\pi\}) =
	2P_{2n}(\{\tfrac{x}{2}\}),
\end{equation}
since for some~$x \in S_n$ the number~$x/2$ does not belong to the
sample space~$S_n$ but does belong to~$S_{2n}$.  We have already
argued that definition \eqref{3prime} implies that~$Q_n(\{x\})$ is
indeed equal to~$P_n(\{x\})$ for every~$x \in S_n$.  This example
shows how internal hyperfinite models 
%
%
of the two spinners are sufficient to refute AP's claim that the
infinitesimal probability of a pair of uniform processes on the same
outcome space allegedly ``depends on details of the process that go
beyond uniformity'' \cite[Section\;3.2]{Pr18}.

A similar rebuttal applies to AP's description of the pair of uniform
lotteries over~$\N$ in \cite[Section\;4.1]{Pr18}.

\subsection
{Refining the models by means of additional constraints}
\label{s45}

AP notes that
\begin{quote}
[T]here are potential types of constraints on the choice of
infinitesimals that may help narrow down the choice of infinitesimal
probabilities.  One obvious constraint is formal: the axioms of
finitely additive probability.  A second type of constraint is
\emph{match} between the extended real probabilities for a problem and
the corresponding classical real-valued probabilities.  (\cite[Section
3.3]{Pr18}; emphasis added)
\end{quote}
Thus, AP envisions the possibility of introducing additional
constraints that may narrow down the choice of infinitesimal
probabilities.  He even evokes more specifically the possibility of
exploring a ``match'' between extended-real probabilities and
classical real-valued probabilities.  To practitioners of mathematics
in Robinson's framework it would be natural to interpret such a
``match'' in terms of the constraint of being \emph{internal}.%
\footnote{Internal sets are elements of ($\ast$-extensions of)
classical sets; see \cite{BK20a}, Section\;3.1.  Robinson's book
(\cite{Ro66}, 1966) deals with \emph{internal} entities
systematically.}
%
%
By envisioning the possibility of introducing further constraints, AP
opens the door to introducing the internal condition.  The internal
condition would in any case be the obvious first choice for a
practitioner of mathematics in Robinson's framework.  It is a
constraint that AP failed to consider.  Similar shortcomings of Elga's
analysis in (\cite{El04}, 2004) were signaled by Herzberg
(\cite{He07}, 2007).

In addition, it is possible to introduce further constraints.  As an
example, we propose two different hyperfinite representations of the
uniform spinners.  Hyperfinite entities, being internal, satisfy the
transfer principle. Moreover, each of the following hyperfinite models
will have various properties that improve upon \tot, \un$_n$,
\reg,~$\neg$\sy$_\R$ and \co\ of the measure~$P_n$.

The main result of Benci et al.\;\cite{bbd} entails that there exists
\begin{itemize}
	\item an algebra of Lebesgue measurable sets~$\mathfrak{B}
	\subseteq \power([0,2\pi))$ such that for every~$A \in
	\mathfrak{B}$, either~$A = \emptyset$ or~$P_{\leb}(A) \ne
	0$;%
\footnote{An interesting and nontrivial choice is the ring of finite
unions of intervals of the form~$[a,b)$. For more details, see Benci
et al.\;(\cite{bbd2}, 2015, p.\;43).}
	\item a hyperfinite set~$\Omega\subseteq{}^\ast[0,2\pi)$ such that for every~$x \in [0,2\pi)$,~$\ns{x} \in \Omega$;
	\item a hyperfinite probability measure~$P_{\Omega}$ over~$\Omega$
\end{itemize}
with the properties \tot, \un$_ {|\Omega|}$, \reg,~$\neg$\sy$_{\R}$, \co\ and the
following additional property:
\begin{itemize}
	\item[\un$_\mathfrak{B}$]~$P_{\Omega}$ is uniform over sets of
	$\mathfrak{B}$, i.e.,
	$P_{\Omega}(\asta\cap\Omega)=
	P_{\Omega}(\astb\cap\Omega)$ whenever~$A, B \in \mathfrak{B}$
	and~$P_{\leb}(A)=P_{\leb}(B)$.
%
%
\end{itemize}
As a consequence of \un$_{|\Omega|}$,~$P_{\Omega}$ satisfies \un$_{n}$
for all~$n \in \N$.  Moreover, the property~\un$_\mathfrak{B}$ implies
that~$P_{\Omega}$ is symmetric with respect to rotations by an
arbitrary real angle, but only on the nonstandard extensions of sets
in~$\mathfrak{B}$.  For more details, we refer to \cite{bbd} and
\cite[Section\;3]{bbd2}, where the example of the Lebesgue measure is
discussed in detail.

The hyperfinite measure~$P_{\Omega}$ can also be used to define a
non-Archi\-me\-dean probability over the sample space~$[0,2\pi)$
%
%
by setting
\begin{equation}
\label{e44}
P(A)=P_\Omega(\asta\cap\Omega) \quad \text{for each} \quad
A\subseteq[0,2\pi).
\end{equation}
The hypothesis that for every~$x \in [0,2\pi)$,~$\ns{x} \in \Omega$
ensures that $P_\Omega(x)=\frac{1}{|\Omega|}$ for
every~$x\in[0,2\pi)$.  Thus the measure $P$ of \eqref{e44} is regular
and uniform in the sense that it assigns the same nonzero probability
to singletons.

In Section \ref{s42} we argue that non-Archimedean probabilities that
satisfy the Omega-limit axiom of Benci et al.\ can be obtained in a
similar way, i.e., by restricting a suitable hyperfinite probability.
In this external measure over~$[0,2\pi)$, the choice for the
probability of a singleton is uniquely determined from the underlying
hyperfinite set~$\Omega$, and it is not arbitrary as argued by AP.

By using an earlier result by Wattenberg \cite{watt}, it is possible
to obtain another hyperfinite set~$\Omega_H$ and a hyperfinite
probability measure~$P_H$ over~$\Omega_H$ that has an additional
advantage.  Namely, it is coherent not only with the Lebesgue measure,
but also with the Hausdorff~$t$-measures.

Recall that the Hausdorff outer measure of order~$t$ of a set~$A
\subseteq \R$ is defined as
\[
\overline{H}^t(A)=\lim_{\delta \rightarrow 0}\left\{\sum_{n\in\N}
\lambda(I_n)^t \colon A \subseteq \bigcup_{n\in\N} I_n,
\lambda(I_n)<\delta\right\}.
\]
The Hausdorff measure~$H^t$ is the restriction of the outer measure
$\overline{H}^t$ to the~$\sigma$-algebra of measurable subsets of
$\Omega$.

The probability measure~$P_H$ over~$\Omega_H$ has the properties \tot,
\mbox{\un$_{|\Omega_H|}^{\phantom{I}}$}, \reg,~$\neg$\sy$_{\R}$ as
well as the following property:
\begin{itemize}
	\item[\co$_H$] if~$B$ is a Borel set with finite nonzero
	Hausdorff~$t$-measure for some real~$t \in [0,+\infty)$, then
	for every~$H^t$-measurable set~$A \subseteq \Omega_H$ the sets
	$A_H =\asta \cap \Omega_H$ and~$B_H = \astb \cap \Omega_H$
	satisfy the relations
	$$\frac{H^t(A \cap B)}{H^t(B)} \approx \frac{P_H(A_H\cap
	B_H)}{P_H(B_H)} = P_H(A_H|B_H).$$
\end{itemize}
Notice that, by taking~$t=1$ and~$B=[0,2\pi)$, \co$_H$ implies \co.

Each of the probability measures~$P_n$,~$P_\Omega$ and~$P_H$ described
above models different aspects of the spinner.  In particular, they
are all uniform, regular and total probability measures; moreover they
approximate to varying degrees the properties of the real continuous
model based upon the Lebesgue measure or upon the Hausdorff measure.

Furthermore, the probability~$P_\Omega$ shows how the critique that
non-Archimedean probabilities do not preserve intuitive symmetries,
presented by AP in \cite[Section\;3.3]{Pr18}, can be addressed by
means of a suitable hyperfinite model.
%
%

Meanwhile, the probability~$P_H$ shows that hyperreal-valued
probability measures can be used simultaneously to represent the
uncountably many Hausdorff~$t$-measures.  The strength of these kinds
of hyperfinite models is not discussed by AP, who only considers the
coherence of a non-Archimedean measure with a single real-valued
measure.

\section{Are infinitesimal probabilities underdetermined?} 
\label{sec underdetermination}

In Section \ref{sec spinners} we presented an analysis of AP's
non-Archimedean modeling.  Now we turn to his
\emph{underdetermination} theorems.  AP's first theorem is based upon
the following construction.  Let~$P$ be a probability function with
values in a non-Archimedean ordered extension of~$\R$.  AP sets
\begin{equation}
\label{e41}
P_\alpha(A) = \mathrm{St}(P(A)) + \alpha(P(A)-\mathrm{St}(P(A)))
\end{equation}
for every~$\alpha > 0$.%
\footnote{AP assumes in addition that~$\alpha \in \R$.  However his
results are still valid for every positive finite~$\alpha \in \astr$.}
AP's first theorem expresses the fact that~$P_\alpha$ is infinitely
close to~$P$ and satisfies the same inequalities as~$P$ does.  This
result can be interpreted as the fact that the probabilities~$P$ and
$P_\alpha$ induce the same comparisons between events.

AP's second theorem asserts a similar result for probability measures
that satisfy the~$\Omega$-limit axiom of Benci et al.
%
%
In this case, starting from a probability that satisfies
the~$\Omega$-limit axiom (see Section~\ref{s42}) and from an
automorphism~$\phi$ of $\astr$ that fixes only the standard real
numbers, it is possible to define a new probability
\begin{equation}
\label{e411}
P_\phi = \phi \circ P
\end{equation}
 that yields the same comparisons
between events 
as~$P$, and that still satisfies the
$\Omega$-limit axiom.

Both results are technically correct, but what AP fails to mention is
that, if~$P$ is internal, then the additional probabilities~$P_\alpha$
and~$P_\phi$ are all external whenever they differ from~$P$, as we
show in Sections~\ref{s41} and \ref{s53}.  Thus, they are badly
behaved and do not satisfy the transfer principle of Robinson's
framework (see \cite{BK20a}, Section\;3.2) and are therefore
\emph{parasitic} in the sense of Clendinnen \cite{Cl89}.%
\footnote{See note \ref{f5}.}
%

%
%
The significance of transfer and related principles both in the
current practice of non-Archimedean mathematics based upon Robinson's
framework (see \cite{BK20a}, Sections 3.1 and 3.2) and in the
historical development of mathematical theories with infinitesimals
(see \cite{BK20a}, Section\;3.3) is sufficient reason to recast AP's
theorems as a warning against the use of external probabilities in
hyperreal modeling.  Thus a careful analysis of Prussian theorems
enables a meaningful criterion for the rejection of AP's
\emph{underdetermination} charge.

For the purposes of the discussion that follows, recall that the
transfer principle entails that any internal probability measure on a
hyperfinite sample space~$\Omega$ is \emph{hyperfinitely additive},
i.e., that for every internal $A\subseteq\Omega$, one has~$P(A) =
\sum_{\omega \in A} P(\{\omega\})$.  Consequently, if a probability
measure is not hyperfinitely additive, then it is not internal.

\subsection{The first theorem of Pruss}
\label{s41}

We begin our analysis with AP's first theorem.  AP is not clear on the
domain of~$P$; here we will assume that~$P$ is an internal probability
measure defined over a hyperfinite set~$\Omega$ and that
$P(\{\omega\}) \approx 0$ for all~$\omega \in \Omega$.  This
hypothesis is not restrictive, since these measures are general enough
to represent every non-atomic\footnote{A real-valued probability
function is called \emph{non-atomic} if and only if it assigns
measure~$0$ to every singleton. If a real-valued probability
measure~$P$ can be decomposed into a sum of a non-atomic
measure~$P_{\rm{na}}$ and a discrete measure~$P_{\rm{d}}$, then our
Theorem \ref{theorem 1} can still be applied to the hyperfinite
representatives of~$P_{\rm{na}}$.}  real-valued probability measure
\cite[Theorem\;2.2, p.\;6]{bbd}.  We define~$P_\alpha$ as in
\eqref{e41}.  Then the following theorem holds.

\begin{theorem}
\label{theorem 1}
Let~$\Omega$ be a hyperfinite set and let~$P\colon
\ns{\mathcal{P}}(\Omega) \rightarrow \astr$ be an internal probability
measure that satisfies~$P(\{\omega\}) \approx 0$ for all~$\omega \in
\Omega$.  If~$\alpha \ne 1$ then~$P_\alpha$ is external.
\end{theorem}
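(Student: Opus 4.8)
The plan is to use the characterization, recalled just before the theorem, that any internal probability measure on a hyperfinite sample space is hyperfinitely additive; hence it suffices to exhibit a single failure of hyperfinite additivity for $P_\alpha$ whenever $\alpha \neq 1$, from which externality follows by the stated consequence. I would argue by contradiction: suppose $P_\alpha$ is internal. Then by transfer it is hyperfinitely additive, and in particular
\[
P_\alpha(\Omega) = \sum_{\omega \in \Omega} P_\alpha(\{\omega\}).
\]

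The heart of the argument is to evaluate the two sides of this identity separately and watch them disagree. On the right, the hypothesis $P(\{\omega\}) \approx 0$ gives $\mathrm{St}(P(\{\omega\})) = 0$, so the defining formula \eqref{e41} collapses on singletons to $P_\alpha(\{\omega\}) = \alpha\,P(\{\omega\})$; that is, $P_\alpha$ acts on singletons by pure rescaling. Pulling the constant $\alpha$ out of the hyperfinite sum (by transfer of the corresponding finite distributivity), and using that $P$ is internal and therefore hyperfinitely additive, I obtain
\[
\sum_{\omega \in \Omega} P_\alpha(\{\omega\}) = \alpha \sum_{\omega \in \Omega} P(\{\omega\}) = \alpha\,P(\Omega) = \alpha.
\]
On the left, since $P(\Omega) = 1$ is standard, \eqref{e41} yields $P_\alpha(\Omega) = 1$. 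Comparing the two evaluations forces $\alpha = 1$, contradicting $\alpha \neq 1$; hence $P_\alpha$ is external.

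I expect the computation to be routine, and I anticipate no serious analytic obstacle. The two points deserving care are minor: first, the legitimacy of both splitting the hyperfinite sum and distributing the scalar $\alpha$, each being an instance of transfer applied to the finite identity it extends (and the sum $\sum_{\omega}\alpha\,P(\{\omega\})$ is a bona fide hyperfinite sum because $\omega \mapsto P(\{\omega\})$ is internal); second, the precise role of the hypothesis $P(\{\omega\}) \approx 0$, which is exactly what annihilates the standard part of each singleton measure and thereby reduces $P_\alpha$ to rescaling on singletons. The genuinely conceptual step, rather than any difficulty, is recognizing that hyperfinite additivity is the internal feature that the deformation $P_\alpha$ must violate: fixing standard parts while rescaling infinitesimals rescales the singleton masses but not the normalization $P_\alpha(\Omega)=1$, so the additive bookkeeping demanded by transfer cannot hold unless $\alpha = 1$.
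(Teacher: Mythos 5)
Your proposal is correct and follows essentially the same argument as the paper: both proofs reduce to computing $\sum_{\omega \in \Omega} P_\alpha(\{\omega\}) = \alpha \sum_{\omega \in \Omega} P(\{\omega\}) = \alpha$ via hyperfinite additivity of the internal measure $P$, comparing with $P_\alpha(\Omega)=1$, and concluding that $P_\alpha$ fails hyperfinite additivity (hence is external) when $\alpha \neq 1$. Your contradiction framing and the explicit justifications (vanishing standard parts on singletons, transfer for pulling out $\alpha$) merely spell out steps the paper leaves implicit.
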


The significance of such externality can be appreciated in light of
the following fact.  If $\{X_i\colon i<H\}$ is an internal sequence of
sets of infinite hyperfinite length $H$, and $P$ an internal measure,
then the sum $\sum_i P(X_i)$ is well defined in $\astr$; but if $P$ is
not internal then generally speaking $\sum_i P(X_i)$ cannot be
reasonably defined at all.

\begin{proof}[Proof of Theorem~\ref{theorem 1}]
Since~$P$ is internal, it is hyperfinitely additive.  As a
consequence,
\[
\sum_{\omega \in \Omega} P_{\alpha}(\{\omega\}) = \sum_{\omega \in
\Omega} \alpha P(\{\omega\}) = \alpha \sum_{\omega \in \Omega}
P(\{\omega\}) = \alpha.
\]
Since~$P_\alpha(\Omega) = 1$, the probability~$P_\alpha$ is
hyperfinitely additive if and only if~$\alpha = 1$, so that if~$\alpha
\ne 1$ then~$P_\alpha$ cannot be hyperfinitely additive and, as a
consequence, it is external.
\end{proof}

\subsection{The~$\Omega$-limit axiom}
\label{s42}

A similar argument refutes AP's interpretation of his second theorem;
see Section~\ref{s53}.  Before showing that the probability
measures~$P_\phi$ obtained by AP are external, it will be
convenient to recall the $\Omega$-limit axiom of Benci et
al.\;\cite{benci2013} and some of its consequences.  In this
subsection,~$\Omega$ will be a set of classical mathematics.  Define
also~$\Lambda=\{A\subseteq\Omega\colon|A|\in\N\}$. Thus if~$\lambda
\in \Lambda$, then~$\lambda$ is a finite set.

The~$\Omega$-limit is a notion of limit governed by the following definition.

\begin{definition}
Let~$\Omega$ be an infinite set and~$F$ an ordered field~$F \supset
\R$.  An \emph{$\Omega$-limit} in~$F$ is a correspondence that
associates to every function~$f\colon \Lambda \rightarrow \R$, an
element of~$F$, denoted by~$\olim f$, in such a way that the following
properties hold:
\begin{enumerate}
\item if there is a~$\overline{\lambda} \in \Lambda$ with~$f(\lambda)
= c \in \R$ for every~$\lambda \supseteq \overline{\lambda}$,
then~$\olim f(\lambda) = c$;
\item for every~$f, g \colon \Lambda \rightarrow \R$, one has
\begin{itemize}
\item~$\olim(f(\lambda)+g(\lambda)) = \olim f(\lambda) + \olim
g(\lambda)$, and
\item~$\olim(f(\lambda)\cdot g(\lambda)) = \olim f(\lambda) \cdot
\olim g(\lambda).$
\end{itemize}
\end{enumerate}
\end{definition}
It is possible to obtain an~$\Omega$-limit by a suitable ultrapower
construction.  If one defines
%
%
$F=\R^\Lambda/\mathcal{U}$, where~$\mathcal{U}$ is a fine and free
ultrafilter over~$\Lambda$,%
\footnote{An ultrafilter~$\mathcal{U}$ over~$\Lambda$ is \emph{fine}
whenever for every~$\lambda \in
\Lambda=\{A\subseteq\Omega\colon|A|\in\N\}$, one has~$\{ L \subseteq
\Lambda \colon \lambda \in L \} \in \mathcal{U}$.  Such ultrafilters
were referred to as \emph{adequate} in earlier literature; see e.g.,
Kanovei--Reeken \cite[p.\;143]{KR}.  A \emph{free} ultrafilter is an
ultrafilter that does not have a~$\subseteq$-least element.}
then~$F$ is a field of hyperreal numbers.  An~$\Omega$-limit over~$F$ can then
be obtained by setting~$\olim f(\lambda) = [f]_{\mathcal{U}}$.
	
A probability function~$P \colon \mathcal{P}(\Omega) \rightarrow \astr$
satisfies the~$\Omega$-limit axiom if and only if there exists an
$\Omega$-limit such that~$P(A) = \olim P(A|\lambda)$ for every~$A
\subseteq \Omega$.
	
Using the~$\Omega$-limit axiom it is possible to define a notion of
infinite sum for~$P$.  Thus, for every~$A \subseteq \Omega$, the sum
$\sum_{\omega \in A} P(\omega)$ is defined as~$\olim
\left(\sum_{\omega \in A \cap \lambda} P(\omega) \right)$; see also
\cite[p.\;6]{benci2018}.

Since a probability that satisfies the~$\Omega$-limit axiom is defined
over a classical set~$\Omega$, the non-Archimedean probabilities that
satisfy the~$\Omega$-limit axiom are external
\cite[p.\;25]{benci2018}. However, this is only due to the
choice by Benci et al.\ 
%
%
of working with a sample space that is not internal.  In fact,
we will now show that non-Archimedean probabilities that satisfy the
$\Omega$-limit axiom can be obtained as the restriction of suitable
internal hyperfinite probabilities.  Let~$\Omega \subseteq \R$.%
\footnote{This hypothesis can be avoided by extending the notion of
$\Omega$-limit as shown by Bottazzi \cite[Sections\;4.2 and 4.3]{tesi}.}
We define the set
\begin{equation}
\label{e42}
\Omega_\Lambda = \left\{ \olim f(\lambda) \colon \Lambda
\stackrel{f}{\rightarrow} \R \text{ and } f(\lambda) \in \lambda
\right\}.
\end{equation}
Then~$\Omega_\Lambda$ is an internal hyperfinite set that represents
$\Omega$.  Moreover, it is possible to define an internal, uniform
probability measure
\begin{equation}
\label{e43}
\overline{P} \colon \ns{\mathcal{P}}(\Omega_\Lambda) \rightarrow \astr
\end{equation}
by setting~$\overline{P}(A) = \frac{|A|}{|\Omega_\Lambda|}$ for every
internal~$A \subseteq \Omega_\Lambda$.  With this definition, one has
\[
P(A)=\overline{P}(\asta\cap\Omega_\Lambda)=
\frac{|\asta\,\cap\,\Omega_\Lambda|}{|\Omega_\Lambda|}
\]
for every~$A \subseteq \Omega$.  A proof of these statements can be
obtained from the proof of Theorem 2.2 in \cite{bbd} as well as
\cite[Section 3.6]{benci2018}.  Observe also that, if~$\phi$ is an
automorphism of~$\astr$, then $P_\phi(A) =
\phi\left(\overline{P}(\asta\cap\Omega_\Lambda)\right)$.

\subsection{The second theorem of Pruss}
\label{s53}

As we showed in Section~\ref{s42}, non-Archimedean probabilities that
satisfy the~$\Omega$-limit are restrictions of hyperfinite internal
measures. We can now state our result concerning AP's
probabilities~$P_\phi$.  First, recall that in \cite[Section\;3.6,
Proposition\;1]{Pr18} AP proves that there are uncountably many
nontrivial automorphisms of~$\astr$ that fix~$\R$.  However, these
automorphisms are all external.

\begin{proposition}
\label{proposition 1}
	If~$\phi \colon \astr \rightarrow \astr$ is a field
	automorphism, then either~$\phi$ is the identity or~$\phi$ is
	external.
\end{proposition}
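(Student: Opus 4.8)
The plan is to show that any field automorphism $\phi\colon\astr\to\astr$ is either trivial or external, by arguing that a nontrivial field automorphism must move some hyperreal, and then deducing that it cannot be internal. The key structural fact I would exploit is that $\astr$ is a real-closed field, so its field automorphisms are automatically order-preserving: in a real-closed field the positive elements are exactly the squares, hence $\phi$ preserves positivity and therefore the order. This rules out the kind of wild behavior one sees for automorphisms of $\mathbb{C}$, and forces $\phi$ to be a monotone bijection of $\astr$ fixing $\Q$ (as every field automorphism fixes the prime field).

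First I would observe that $\phi$ fixes $\R$ pointwise, or at least reduce to that case, since an order-preserving automorphism fixing $\Q$ must fix every real by a squeezing argument (a real $r$ is determined by the rationals above and below it, which $\phi$ permutes trivially). Next, the core of the argument is to assume toward a contradiction that $\phi$ is internal and nontrivial, and to derive that it must then be the identity. The idea is that an internal order-automorphism of $\astr$ fixing all the standard reals would, by transfer, have to behave like a classical order-automorphism of $\R$. More precisely, an internal bijection $\astr\to\astr$ that is order-preserving and fixes every standard real is the nonstandard extension of some real function only if that function is the identity on $\R$; transfer then pins down $\phi$ to be the identity on all of $\astr$. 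The mechanism I would make precise: if $\phi$ is internal and monotone and agrees with the identity on the dense (in the order-topology sense relevant here) set $\R\subseteq\astr$, then for any $x\in\astr$ one can trap $\phi(x)$ between $\phi(r)=r$ and $\phi(s)=s$ for reals $r<x<s$, showing $\phi(x)\approx x$; upgrading ``$\approx$'' to ``$=$'' uses internality together with the multiplicativity of $\phi$.

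The cleanest route, and the one I would ultimately pursue, is to invoke the hyperfinite-additivity criterion already established in the excerpt rather than reprove monotonicity from scratch. Since the preceding discussion records that an internal measure on a hyperfinite sample space is hyperfinitely additive, and since Section~\ref{s42} exhibits every $\Omega$-limit probability $P$ as $P_\phi(A)=\phi(\overline P(\asta\cap\Omega_\Lambda))$ for the internal uniform measure $\overline P$, I would argue as follows. If $\phi$ were a nontrivial \emph{internal} automorphism, then $P_\phi=\phi\circ\overline P$ restricted appropriately would again be an internal measure, hence hyperfinitely additive, hence would have to equal $\overline P$ on all singletons of the hyperfinite space (each of which gets value $1/|\Omega_\Lambda|$, an explicit hyperrational fixed by any field automorphism). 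Additivity then propagates equality to every internal set, forcing $\phi$ to be the identity on the range of $\overline P$, and a separability/density argument extends this to all of $\astr$, contradicting nontriviality.

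The hard part will be making the final extension from ``$\phi$ is the identity on a large internal subfield or on all values of $\overline P$'' to ``$\phi$ is the identity on $\astr$,'' since a priori the values of a single measure need not generate the whole field. The honest obstacle is that a direct internality-versus-transfer argument is delicate: internality of $\phi$ is a strong hypothesis, and the temptation is to say that an internal order-automorphism fixing $\R$ must by transfer be $\ast(\mathrm{id}_\R)=\mathrm{id}$, but one must be careful that internal order-automorphisms of $\astr$ need not be nonstandard extensions of real automorphisms. I would therefore expect the decisive step to be a saturation or internal-definability argument showing that an internal map fixing every standard real and preserving the field operations cannot move any infinitesimal or infinite element without violating internality, and I would flag this as the step most in need of a careful, rather than routine, justification.
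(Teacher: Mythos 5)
Your proposal does not close, and the step you yourself flag as ``most in need of a careful, rather than routine, justification'' is precisely the whole content of the proposition; the paper settles it in one line. The paper's proof is: it is a classical theorem that the only field automorphism of $\R$ is the identity; transferring this sentence --- whose universal quantifier ranges over \emph{all} real functions --- yields that every \emph{internal} field automorphism of $\astr$ is the identity, because the transferred quantifier ranges exactly over the internal functions $\astr\to\astr$. Your stated worry, that an internal order-automorphism of $\astr$ ``need not be the nonstandard extension of a real automorphism,'' is true but irrelevant: transfer in this form does not require $\phi$ to be a star-extension of anything; it directly constrains every internal function. Missing this, you are left gesturing at an unspecified ``saturation or internal-definability argument,'' i.e.\ the proposition remains unproved in your write-up.

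The two concrete routes you sketch also contain errors. First, the squeezing argument over $\Q$ only yields $\phi(r)\approx r$ for standard $r$, not $\phi(r)=r$: rationals cannot separate two hyperreals that are infinitely close, so ``$\phi$ fixes $\R$ pointwise'' is not established this way (nor is it needed in the paper's proof). Second, your claim that $1/|\Omega_\Lambda|$ is ``an explicit hyperrational fixed by any field automorphism'' is false: an automorphism fixes the prime field $\Q$, but there is no reason it fixes $1/N$ for an infinite hypernatural $N$; showing that an \emph{internal} restriction of $\phi$ must fix such elements is exactly the nontrivial content of the paper's Theorem~\ref{theorem 2}, proved there by an internal least-counterexample argument. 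Third, even if the measure-based route worked, it would only force $\phi$ to be the identity on the range of $\overline{P}$, a set far too small to generate $\astr$; you acknowledge this extension problem but do not solve it, and indeed the paper's Theorem~\ref{theorem 2} is deliberately stated only about the restriction of $\phi$ to that range, not about $\phi$ on all of $\astr$.
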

\begin{proof}
	It is well known that the only field automorphism of~$\R$ is the identity.
	Thus, by transfer, the only internal field automorphism of~$\astr$ is the identity.
\end{proof}

Our Proposition \ref{proposition 1} already suggests that probability
measures~$P_\phi$ are external whenever they do not coincide with~$P$.
However, we will prove such a result explicitly, establishing our
counterpart to AP's second theorem.

\begin{theorem}
\label{theorem 2}
%
Let~$P\colon \mathcal{P}(\Omega) \rightarrow \astr$ be a probability
measure that satisfies the~$\Omega$-limit axiom and let
$\Omega_\Lambda$ and~$\overline{P}$ be defined as in \eqref{e42} and
\eqref{e43}.  Suppose also that~$\phi\colon \astr \rightarrow \astr$ is a
field automorphism.  Then the measure~$\overline{P}_\phi = \phi \circ
\overline{P}$ is internal if and only if the restriction of~$\phi$ to
the range of\,~$\overline{P}$ is the identity.%
%
%
\end{theorem}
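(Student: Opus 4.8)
The plan is to exploit the characterization recalled just before Section~\ref{s41}: an internal probability measure on a hyperfinite sample space is necessarily hyperfinitely additive, so that failure of hyperfinite additivity certifies externality. Writing $N = |\Omega_\Lambda|$ for the internal cardinality of the sample space, the decisive feature is that $\overline{P}$ is the uniform measure of~\eqref{e43}, whence $\overline{P}(\{\omega\}) = \tfrac{1}{N}$ for every $\omega \in \Omega_\Lambda$. I would then split the biconditional into its two implications, with the forward (``if'') direction essentially formal and the reverse (``only if'') direction carrying the content.

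For the ``if'' direction I would note that, if the restriction of $\phi$ to the range of $\overline{P}$ is the identity, then for every internal $A \subseteq \Omega_\Lambda$ the value $\overline{P}(A)$ lies in that range, so $\overline{P}_\phi(A) = \phi(\overline{P}(A)) = \overline{P}(A)$. Hence $\overline{P}_\phi = \overline{P}$ as functions, and since $\overline{P}$ is internal by construction, so is $\overline{P}_\phi$.

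For the ``only if'' direction I would argue as follows. Assume $\overline{P}_\phi$ is internal; then it is hyperfinitely additive. Since $\phi$ is a field homomorphism, $\overline{P}_\phi(\{\omega\}) = \phi(\tfrac{1}{N}) = \tfrac{1}{\phi(N)}$ for each $\omega$. Applying hyperfinite additivity to $\Omega_\Lambda$, which has internal cardinality $N$, yields $\overline{P}_\phi(\Omega_\Lambda) = N \cdot \tfrac{1}{\phi(N)} = \tfrac{N}{\phi(N)}$; but $\overline{P}_\phi(\Omega_\Lambda) = \phi(\overline{P}(\Omega_\Lambda)) = \phi(1) = 1$, forcing $\phi(N) = N$. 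Then for an arbitrary internal $A \subseteq \Omega_\Lambda$ the same additivity gives $\overline{P}_\phi(A) = |A| \cdot \tfrac{1}{\phi(N)} = \tfrac{|A|}{N} = \overline{P}(A)$. Combining this with $\overline{P}_\phi(A) = \phi(\overline{P}(A))$ shows $\phi(\overline{P}(A)) = \overline{P}(A)$ for every internal $A$, that is, $\phi$ is the identity on the range of $\overline{P}$.

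The step I expect to be the crux is the equality $\phi(N) = N$. A field automorphism of $\astr$ fixes the prime field $\Q$ but need not fix the infinite hyperinteger $N$ a priori, so this cannot be assumed; it must be extracted from the normalization $\overline{P}_\phi(\Omega_\Lambda) = 1$ together with hyperfinite additivity, which is precisely the mechanism already used in the proof of Theorem~\ref{theorem 1}. The remaining points are routine: that $\phi(\tfrac{1}{N}) = \tfrac{1}{\phi(N)}$ follows from $\phi$ preserving products and the unit, and that every singleton receives measure $\tfrac{1}{N}$ is exactly the uniformity of $\overline{P}$.
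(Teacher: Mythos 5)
Your proof is correct, but it follows a genuinely different route from the paper's. You prove the ``only if'' direction directly, by the same hyperfinite-additivity mechanism as Theorem~\ref{theorem 1}: internality of $\overline{P}_\phi$ forces hyperfinite additivity, uniformity gives $\overline{P}_\phi(\{\omega\})=1/\phi(N)$, the normalization $\overline{P}_\phi(\Omega_\Lambda)=\phi(1)=1$ then forces $\phi(N)=N$, and hence $\overline{P}_\phi(A)=|A|/N=\overline{P}(A)$ for every internal $A$, so $\phi$ fixes the range pointwise. The paper instead argues contrapositively, in two steps that make no use of hyperfinite additivity: (a) the restriction $\psi$ of $\phi$ to the range $S=\left\{0,\tfrac{1}{n},\tfrac{2}{n},\ldots,1\right\}$ cannot be internal unless it is the identity, because internality of $\psi$ would make the set $\{0,1,2,\ldots,\psi(k),\ldots,\psi(n)\}$ internal, and then the least $k$ with $\psi(k)\ne k$ yields the contradiction $\psi(k)=\psi(k-1)+\psi(1)=k$; (b) externality of $\psi$ is then transferred to $\overline{P}_\phi=\psi\circ\overline{P}$ by choosing an internal right inverse $\chi$ of the internal surjection $\overline{P}$, since internality of $\psi\circ\overline{P}$ would make $\psi=\psi\circ\overline{P}\circ\chi$ internal. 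Your route has the advantage of unifying Theorems \ref{theorem 1} and \ref{theorem 2} under a single mechanism and of proving something slightly stronger, namely that an internal $\overline{P}_\phi$ actually coincides with $\overline{P}$; the paper's route isolates two facts of independent interest, namely a sharpening of Proposition~\ref{proposition 1} (no nontrivial restriction of a field automorphism to a hyperfinite grid is internal) and the right-inverse trick, which shows that $\psi\circ\overline{P}$ is external for an \emph{arbitrary} external $\psi$, not only one induced by an automorphism. One point you should make explicit: to invoke the recalled characterization you need $\overline{P}_\phi$ to be an internal \emph{probability measure}, i.e., finitely additive, normalized and non-negative; this is immediate because $\phi$ is additive and unital, and it is order-preserving since positive elements of the real closed field $\astr$ are squares.
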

\begin{proof}
Recall that the range of the measure~$\overline{P}$ is the hyperfinite
set~$S=\left\{0, \frac{1}{n}, \frac{2}{n},\ldots,\frac{k}{n},
\ldots,1\right\}$, where~$n = |\Omega_\Lambda|$.  Let~$\psi$ be the
restriction of~$\phi$ to the range of~$\overline{P}$.  If~$\psi$ is
the identity, then~$\overline{P}_\phi = \overline{P}$.

Suppose~$\psi$ is an internal automorphism different from the
	identity.  Then
%
%
	the set~$\left\{0, \frac{1}{\psi(n)},
	\frac{2}{\psi(n)},\ldots,\frac{\psi(k)}{\psi(n)}, \ldots, 1
	\right\}$ is internal.  Multiplying every element of this set
	by~$\psi(n)$, we would obtain that the set~$\left\{0, 1,
	2,\ldots,\psi(k), \ldots, \psi(n) \right\}$ is internal, as
	well.  Let~$k$ be the least number such that~$\psi(k)\ne
	k$. Then~$\psi(k-1) = k-1$, so~$\psi(k) = \psi(k-1+1) =
	\psi(k-1)+\psi(1)=k-1+1=k$, a contradiction.  

Thus
%
%
$\psi$ is necessarily external (or the identity).  It remains to show
that~$\psi \circ \overline{P}$ is external.  Since~$\overline{P}$ is
an internal mapping of the hyperfinite set
$^\ast\mathcal{P}(\Omega_\Lambda)$ onto~$S=\left\{0, \frac{1}{n},
\frac{2}{n},\ldots,1\right\}$, it has an internal right inverse
$\chi\colon S\rightarrow {}^\ast\mathcal{P}(\Omega_\Lambda)$ such that
$\overline{P} \circ \chi~$ is the identity on~$S$.  If~$\psi \circ
\overline{P}$ were internal, then~$\psi \circ \overline{P} \circ \chi
= \psi$ would be internal, contrary to our hypothesis.
%
%
\end{proof}


\subsection
{Internal probability measures are not underdetermined}
\label{notunder}

Our Theorems \ref{theorem 1} and \ref{theorem 2} indicate that the
\emph{underdetermination} issue raised by AP is present only
whenever one considers external probability measures in addition to
the internal ones.  Significantly, such external probability measures
do not exist in Nelson's Internal Set Theory (see \cite{BK20a},
Section\;3.2).  Notice also that working with internal probability
measures is not restrictive, since, as we have shown, non-Archimedean
functions that satisfy the~$\Omega$-limit can be obtained as the
restriction of suitable internal hyperfinite probabilities.

It should be noted that we are not claiming that external measures are
not useful for hyperreal models.  In fact, the external
non-Archimedean probabilities by Benci et al.\ and the \emph{Loeb
measures}, that are obtained from suitable internal measures, play a
relevant role for the development of infinitesimal probabilities and
for the hyperreal measure theory, respectively.%
\footnote{Recall that the Loeb measure of an internal measure~$\mu$ is
obtained by composing~$\mu$ with the (external) standard part
function.  By the Caratheodory extension theorem, the resulting
pre-measure is then extended to a measure defined on an external
$\sigma$-algebra, that is usually called the \emph{Loeb measure}
associated to~$\mu$.}
What we showed is that the external probabilities proposed by AP do
not have even basic properties such as hyperfinite additivity. As a
consequence, these external probabilities are clearly inferior to
their internal counterparts.

In conclusion, once a hyperfinite sample space~$\Omega$ is determined,
there is only one internal, uniform and regular probability measure
$P$ over~$\Omega$.  As a consequence, internal uniform probability
measures with hyperfinite support are not \emph{underdetermined},
contrary to what was alleged by Pruss.%
\footnote{This argument and the possibility of uniquely specifying a
hyperreal field, as discussed in Section 2.1 of \cite{BK20a}, refute
Pruss' claim that hyperreal probabilities are underdetermined both in
the choice of a specific hyperreal field and in the choice of the
infinitesimal probability of singletons.}

Indeed, Pruss' pair of theorems can be interpreted as a warning
against the use of external measures in hyperreal probabilities.  As
already noted in Section~\ref{s45}, Pruss envisions the possibility of
introducing additional constraints.  The discussion above suggests
that the very first constraint one should envision is the
\emph{natural} (and even \emph{obvious}, to a practitioner of
mathematics in Robinson's framework) constraint of being
\emph{internal}.

In an analysis of the phenomenon of underdetermination, Clendinnen
speaks of ``empirically equivalent alternatives [that are] parasitic
on [the original] theory'' \cite[p.\;63]{Cl89}, an apt description of
Pruss' external measures of the form \eqref{e41} and \eqref{e411}.

\section{Probability measures on transferless fields} 
\label{s6}

Pruss claims that some extensions of the real numbers with
infinitesimals, such as the surreal numbers, the fields of Laurent
series or the Levi-Civita field, might provide a non-arbitrary choice
of infinitesimals for the development of non-Archimedean
probabilities.
%


%

%

Due to the absence of a transfer principle, already discussed in
\cite[Section\;3.4]{BK20a}, there are some obstacles to developing a
probability theory over these non-Archimedean extensions of the real
numbers.  We will discuss two examples in detail in Sections~\ref{sec
surreals} and\;\ref{s62}.

\subsection
{Probability measures on the surreal numbers}
\label{sec surreals}

Pruss suggests that Conway's surreal numbers (usually denoted
\textbf{No}) might be suitable for the development of infinitesimal
probabilities, since
\begin{quote}
	Whatever you can do with hyperreals, you
	can do with surreals, since any field of hyperreals can be embedded in the surreals. \cite[Section\;2]{Pr18}
\end{quote}
Pruss seeks support for such a claim in Ehrlich \cite{ehrlich}.  In
fact, the result \cite[Theorem\;20]{ehrlich} entails that in the von
Neumann--Bernays--G\"odel set theory with global choice,
%
%
there exists a unique (up to an isomorphism) structure~$(\R, \R^\ast,
\ast)$ such that~$\R^\ast$ satisfies Keisler's axioms for hyperreal
number systems and~$\R^\ast$ is a proper class 
isomorphic to the class of surreal numbers.

Ehrlich observes, furthermore, that every field of hyperreal numbers
is isomorphic to an initial segment of the surreal numbers:
\begin{quote}
Since every real-closed ordered field is isomorphic to an initial
subfield of \textbf{No}, the underlying ordered field of any hyperreal
number system is likewise isomorphic to an initial subfield of
\textbf{No}.%
\footnote{It should be noted that the property of being an ordered
field is a small fraction of the properties of the reals required to
develop any substantial calculus.  The surreals lack many relevant
properties even with regard to their collection of natural numbers;
see \cite{BK20a}, note~28.}
For example, the familiar ultrapower construction of a
hyperreal number system as a quotient ring of~$\R^\N$ (modulo a given
nonprincipal ultrafilter on~$\N$) is isomorphic
to~$\mathbf{No}(\omega_1)$ {\ldots} assuming [the continuum
hypothesis] {\ldots} Similarly, if we assume there is an uncountable
inaccessible cardinal, ~$\omega_{\alpha}$ being the least, then
~$\mathbf{No}(\omega_\alpha)$ {\ldots} is isomorphic to the underlying
ordered field in the hyperreal number system employed by Keisler in
his \emph{Foundations of Infinitesimal Analysis}.  \cite[Section\;9,
p.\;35]{ehrlich}
\end{quote}

Indeed, it is only the isomorphisms mentioned by Ehrlich that endow
the surreal numbers (or their initial segments) with the transfer
principle of Robinson's framework.%
\footnote{As well as with the natural numbers and all the associated
structure over $\langle\R;\N\rangle$ necessary to develop calculus and
measure theory.}
In addition, the~$\ast$-map allows for the extension of functions in a
way that the \emph{simplicity hierarchical structure} of the surreal
numbers is not yet able to handle~\cite{workshop}.

If one refrains from exploiting the transfer principle of Robinson's
framework, then the current developments of analysis over the surreal
field are fairly limited.  For instance, it is still an open problem
to define an integral over \textbf{No}, despite some limited results
obtained in the last two decades (Costin et al.\ \cite{integral},
2015; Fornasiero \cite{fornasiero}, 2004).  As a consequence, it is
not currently possible to develop a measure theory, let alone a
probability theory, over the surreal numbers.

\subsection
{Probability measures on the Levi-Civita field}
\label{s62}

At the other end of the spectrum, Pruss advocates the use of the
``elegantly small'' Levi-Civita field~$\civita$. This extension of the
real numbers was introduced by Levi-Civita in \cite{civita1, civita2}.
It is the smallest non-Archimedean ordered field extension of the
field~$\R$ of real numbers that is both real closed and complete in
the order topology.%
\footnote{As opposed to other forms of completeness.}
For an introduction, we refer to Lightstone--Robinson \cite{robinson
civita} and to Berz--Shamseddine \cite{shamseddine berz analysis}.

The Levi-Civita field is defined as the set
\[
\civita = \left\{ x\in \R^\Q\colon \forall q \in \Q\ \mathrm{supp} (x)
\cap (-\infty,q] \text{ is finite} \right\}
\]
together with the pointwise sum, and the product defined by the
formula
\begin{equation}
\label{e41b}
	(x \cdot y)(q) = \sum_{q_1+q_2=q} x(q_1) \cdot y(q_2).
\end{equation}
If one defines the element~$d \in \civita$ by posing
$$
	d(q) = \left\{
	\begin{array}{ll}
	1 & \text{if } q = 1\\
	0 & \text{if } q \ne 1,
	\end{array}
	\right.
$$
%
then every number in~$\civita$ can be written as a formal sum
$
	x = \sum_{q \in \Q} a_q d^q.
$
In this sum the set
$
	Q(x) = \{ q \in \Q \colon a_q \ne 0\}
$
has the property that the intersection~$Q(x) \cap (-\infty,q]$ is finite for every~$q \in \Q$.%
\footnote{Pruss incorrectly states that only finitely many of the
$a_q$ are nonzero \cite[Section\;2]{Pr18}.}
Elements of~$\R$ can be identified with those elements in~$\civita$
whose support is a subset of the singleton~$\{0\}$.  The
field~$\civita$ can be linearly ordered%
\footnote{\label{foot37}It is customary to define the order
on~$\civita$ in a way that the number~$d$ is an infinitesimal.}
and is sequentially complete in the order topology, but due to the
presence of infinitesimal elements it is totally disconnected%
\footnote{This property is shared by every non-Archimedean extension
of~$\R$. However, fields of hyperreal numbers of Robinson's framework
overcome this limitation by working with internal sets and
functions. For instance, in both the Levi-Civita field and in~$\astr$
the series~$\sum_{n\in\N} r$ is not defined unless~$r =0$, but a sum
of hyperfinitely many copies of~$r \in \astr$ is
well-defined. Similarly, in the Levi-Civita field one should take into
account that the function~$f$ defined by~$$f(x) =
\left\{\begin{array}{ll} 1 & \mathrm{if}\ x \approx 0 \\ 0 &
\mathrm{if}\ x \not\approx 0\end{array} \right.$$
%
%
is differentiable at every point in~$\civita$, but it does not satisfy
e.g. the Intermediate Value Theorem or the Mean Value Theorem.
Meanwhile, the counterpart of this function in Robinson's framework is
external, so it does not provide a counterexample to the Intermediate
Value Theorem or to the Mean Value Theorem.} 
(the topological properties of~$\civita$ are discussed in detail by
Shamseddine \cite{shamseddine 2010}).

If one considers the language~$\lang$ of ordered fields,%
\footnote{Namely~$\lang = \{ +, -, 0, \cdot, \mbox{}^{-1}, 1, < \}$,
where~$+$ and~$\cdot$ are binary functions,~$-$ and~$\mbox{}^{-1}$ are
unary functions,~$0$ and~$1$ are constant symbols, and~$<$ is a binary
relation.}
then the real numbers~$\R$, fields of hyperreal numbers and the
Levi-Civita field~$\civita$ are~$\lang$-structures which are models of
the model-complete%
\footnote{A theory in a language~$L$ is \emph{model-complete} if for
every pair~$M$ and~$N$ of models of the theory, if there is an
embedding~$i \colon M \hookrightarrow N$, then the embedding is
elementary. An embedding is elementary whenever for every first-order
formula~$\phi$ in the language~$L$,~$M \models \phi(a_1, \ldots, a_n)$
if and only if~$N \models \phi(i(a_1), \ldots, i(a_n)$.}
$\lang$-theory of real closed ordered fields
\cite{keislerchang,modeltheory}.  However, the Levi-Civita field is
not elementarily equivalent to~$\R$, and the only result analogous to
the transfer principle between~$\R$ and~$\civita$ is fairly
limited. In particular, only locally analytic functions can be
extended from real closed intervals to closed intervals in~$\civita$
in a way that preserves elementary properties~\cite[Sections 3,
4]{Bo18}.

Consider now the uniform measure~$\m$ on the Levi-Civita field studied
by Berz--Shamseddine (\cite{berz+shamseddine2003}, 2003), Shamseddine
(\cite{shamseddine2012}, 2012), and Bottazzi (\cite{bottazzi
forthcoming}, 2020).  A set is measurable with respect to~$\m$ if it
can be approximated arbitrarily well by intervals, in analogy with the
Lebesgue measure.  However, due to the properties of the topology of
the Levi-Civita field, the measure~$\m$ turns out to be rather
different from the Lebesgue measure over~$\R$. For instance,
in~$\civita$ the complement of a measurable set is not necessarily
measurable.  As a consequence, the family of measurable sets is not
even an algebra.

As with the Lebesgue measure, the family of measurable functions is
obtained from a family of simple functions.  For the Lebesgue measure,
these are the step functions; however a similar choice in the
Levi-Civita field would lead to a very narrow class of measurable
functions. A more fruitful choice for the Levi-Civita field is to
define the simple functions as the analytic functions over a closed
interval.

Regardless of the choice of simple functions, Shamseddine and Berz
proved that any measurable function on a set~$A\subseteq \civita$ is
locally simple almost everywhere on~$A$ \cite[Proposition 3.4,
p.\;379]{berz+shamseddine2003}. As a consequence, an absolutely
continuous probability measure over~$\civita$ can only have a locally
analytic density function; similarly, the only real probability
functions that can be extended to probability functions over the
Levi-Civita field are locally analytic.  A partial converse of this
negative result shows that measurable functions are not expressive
enough to approximate with an infinitesimal error a real probability
that is not locally analytic at any point of its domain
\cite[Proposition 3.16]{bottazzi forthcoming}.

We now turn our attention to discrete probability measures defined
over the Levi-Civita field.  We note that this field has no notion of
hyperfiniteness or of sum over sets that are not countable.  Moreover,
if~$h \in \civita$ is an infinitesimal, the limit~$\lim_{n \rightarrow
\infty} \sum_{i =0}^n h = \lim_{n \rightarrow \infty} nh$ is not
defined, since the sequence~$\{nh \}_{n \in\N}$ does not converge
in~$\civita$.  This is a significant difference with hyperreal-valued
measures, where one can sum a internal hyperfinite family of
infinitesimal terms and obtain a well-defined hyperreal result.

As a consequence, whenever~$\Omega \subseteq \civita$ is not a finite
set, there cannot be a regular probability measure~$\m$ over~$\Omega$
that assigns the same probability to every~$\omega\in\Omega$.  In this
regard,~$\civita$-valued discrete probability measures do not offer
any improvement upon real-valued discrete probability measures.

These properties of the uniform measure and of the discrete
probability measures over the Levi-Civita field impose serious
limitations on the use of the Levi-Civita field as the target space of
a probability measure, be it absolutely continuous, discrete, or a
combination of the two.

\subsection
{Probability measures in other fields with infinitesimals}

The Hahn field, obtained from functions $x \in \R^{\Q}$ with
well-ordered support with the pointwise sum and the product defined as
in formula~\eqref{e41b}, shares some properties with the Levi-Civita
field.  Nevertheless, there is no measure theory over the Hahn field
yet, even though some preliminary results on the convergence of power
series were obtained by Flynn and Shamseddine in (\cite{Fl19}, 2019).

Kaiser recently developed a uniform measure over a class of
non-Archi\-me\-dean real closed fields (\cite{kaiser}, 2018).
However, this measure is only defined for semialgebraic sets. This
condition is even more restrictive than the one imposed on the
measurable sets in the Levi-Civita field (see Section~\ref{s62}).
Note also that the resulting measure is only finitely additive.

The picture that emerges is very different from the one sketched by
Pruss.  Robinson's hyperreal fields have many advantages over other
non-Archimedean extensions of~$\R$, as already realized by Fraenkel
(see \cite{BK20a}, Section\;3.3), and it is unlikely that the gap
between these theories will be closed any time soon.

\section{Conclusion} 

In his \emph{Synthese} paper \cite{Pr18}, Pruss claims that the
infinitesimal probabilities of Robinson's framework for analysis with
infinitesimals are \emph{underdetermined}.  His claim hinges upon
\begin{enumerate}
\item
\label{pr1}
some models of infinitesimal probabilities, and
\item
\label{pr2}
a pair of theorems that entail the existence of uncountably many
infinitesimal probabilities that yield the same decision-theoretic
comparisons as the original one.
\end{enumerate}
In Section\;\ref{sec spinners}, we addressed issue \eqref{pr1} by
showing that
%
%
proper hyperfinite models avoid the \emph{underdetermination} problem.
In Section\;\ref{sec underdetermination} we focused on Prussian
theorems mentioned in item~\eqref{pr2}, and proved that all of the
additional infinitesimal probabilities obtained by Pruss are
external. As a consequence, once a hyperfinite sample space has been
chosen, there is only one internal probability measure over it. The
results of Sections\;\ref{sec spinners} and \ref{sec
underdetermination} suggest that the \emph{underdetermination}
critique of Pruss is limited to external entities.%
\footnote{We remark that, while the probability functions that satisfy
the~$\Omega$-limit axiom are external, they are obtained as the
restriction of suitable internal hyperfinite functions, as shown in
Section~\ref{s42}.  Thus they are also impervious to Pruss'
\emph{undetermination} claim.}
Thus, working with internal sets and functions in Robinson's framework
dissolves the underdetermination objection.  While recognizing that it
may be possible to narrow down the choice of infinitesimal probability
using additional constraints, Pruss fails to consider the natural
\emph{internal} constraint.  Pruss could respond by arguing that
internality is not the kind of criterion he had in mind; however, the
point remains that internality is such an obvious choice that it
should have been addressed one way or another.  The fact that the
issue is not examined in his paper constitutes a serious shortcoming
of his analysis.

A would-be critic of Robinson's framework could then retreat to an
even more limited objection to the effect that the choice of the
sample space is underdetermined.  However, in the discussion of the
hyperfinite models we showed the following:
\begin{itemize}
	\item this choice is underdetermined in the Archimedean case,
	as well; however, this underdetermination is not problematic,
	since different models (be they Archimedean or hyperfinite)
	are compatible, as discussed in Sections\;\ref{sec archimedean
	spinners 2} and \ref{sec hyperfinite spinners}.
	\item it is also possible to specify additional criteria for
	the choice of a hyperfinite model: for instance, it is
	possible to preserve rotational symmetry on a non-trivial
	family of sets or coherence with uncountably many real-valued
	measures, as shown in Section\;\ref{s45}.  Thus the
	possibility of working with hyperfinite models that improve
	upon the properties of the Archimedean ones should be regarded
	as an advantage of hyperreal probabilities.
\end{itemize}
In his critique, Pruss also suggests that other fields with
infinitesimals are more suitable for the development of infinitesimal
probabilities.  However, in \cite{BK20a}, Sections\;3.3 and\;3.4 we
showed that this claim ignores the Klein--Fraenkel criteria for the
usefulness of a theory with infinitesimals.  In addition, in
Section\;\ref{s6} we showed that probabilities taking values in the
surreal numbers and in the Levi-Civita field are less expressive than
real-valued probabilities, mainly due to the absence of a transfer
principle for these structures.  Moreover, the absence of a
comprehensive transfer principle makes such fields vulnerable to
Theorem 1 of Pruss.  In contrast to hyperreal fields, transferless
fields do not possess a notion of internality and are thus unable to
escape underdetermination.

\section*{Acknowledgments}

We are grateful to Karel Hrbacek and Vladimir Kanovei for insightful
comments on earlier versions that helped improve our article, and to
anonymous referees for constructive criticism.  The influence of
Hilton Kramer (1928--2012) is obvious.


\begin{thebibliography}{12}
	
\bibitem{Al86} Albeverio, Sergio, Raphael H\o{}egh-Krohn, Jens Erik
Fenstad, and Tom Lindstr\o{}m [1986]: \emph{Nonstandard Methods in
Stochastic Analysis and Mathematical Physics.}  Pure and Applied
Mathematics, 122.  Academic Press, Orlando, FL.


\bibitem{14a} Bascelli, Tiziana, Emanuele Bottazzi, Frederik Herzberg,
Vladimir Kanovei, Karin Katz, Mikhail Katz, Tahl Nowik, David Sherry,
and Steven Shnider [2014]: `Fermat, Leibniz, Euler, and the gang: The
true history of the concepts of limit and shadow', \emph{Notices of
the American Mathematical Society} \textbf{61}, no.\;8, 848--864.  See
\url{http://www.ams.org/notices/201408/rnoti-p848.pdf} and
\url{https://arxiv.org/abs/1407.0233}
	


\bibitem{bbd} Benci, Vieri, Emanuele Bottazzi, and Mauro Di Nasso
[2014]: `Elementary numerosity and measures', \emph{Journal of Logic
and Analysis} \textbf{6}, Paper 3, 14 pp.

	
\bibitem{bbd2} Benci, Vieri, Emanuele Bottazzi, and Mauro Di Nasso
[2015]: `Some applications of numerosities in measure theory',
\emph{Rendiconti Lincei-Matematica E Applicazioni} \textbf{26},
no.\;1, 37--48.


\bibitem{benci2013} Benci, Vieri, Leon Horsten, and Sylvia Wenmackers
[2013]: `Non-Archimedean probability', \emph{Milan Journal of
Mathematics} \textbf{81}, 121--151.


\bibitem{benci2018} Benci, Vieri, Leon Horsten, and Sylvia Wenmackers
[2018]: `Infinitesimal probabilities', \emph{British Journal for the
Philosophy of Science} \textbf{69}, 509--552.
	

	
\bibitem{workshop} Berarducci, Alessandro, Philip Ehrlich, and Salma
Kuhlmann [2017]: `Mini-workshop: Surreal numbers, surreal analysis,
Hahn fields and derivations', Oberwolfach Reports 13, no.\;4,
3313--3372.

	
\bibitem{tesi} Bottazzi, Emanuele [2012]: `$\Omega$-Theory:
Mathematics with Infinite and Infinitesimal Numbers', Master thesis,
University of Pavia, Italy.

	
\bibitem{Bo18} Bottazzi, Emanuele [2018]: `A transfer principle for
the continuation of real functions to the Levi-Civita field',
\emph{p-Adic Numbers, Ultrametric Analysis, and Applications},
\textbf{10}, no.\;3, 179--191.

	
\bibitem{bottazzi forthcoming} Bottazzi, Emanuele [2020]: `Spaces of
measurable functions on the Levi-Civita field', \emph{Indagationes
Mathematicae} \textbf{31}, no.\;4, 650--694.  See
\url{https://doi.org/10.1016/j.indag.2020.06.005} and
\url{https://arxiv.org/abs/1909.10069}

	
\bibitem{19c} Bottazzi, Emanuele, Vladimir Kanovei, Mikhail Katz,
Thomas Mormann, and David Sherry [2019]: `On mathematical realism and
the applicability of hyperreals', \emph{Mat. Stud}.  \textbf{51},
no.\;2, 200--224.  See
\url{http://dx.doi.org/10.15330/ms.51.2.200-224} and
\url{https://arxiv.org/abs/1907.07040}


\bibitem{BK20a} Bottazzi, Emanuele, and Mikhail Katz [2020]: `Infinite
lotteries, spinners, and the applicability of hyperreals,'
\emph{Philosophia Mathematica}, to appear.

	
\bibitem{keislerchang} Chang, Chen Chung, and H. Jerome Keisler
[1971]: \emph{Model Theory}.  Appleton Century Crofts, New York.
	
\bibitem{Cl89} Clendinnen, F. John [1989]: `Realism and the
underdetermination of theory', \emph{Synthese} \textbf{81}, 63--90.

	
\bibitem{integral} Costin, Ovidiu, Philip Ehrlich, and Harvey Friedman
[2015]: `Integration on the surreals: a conjecture of Conway, Kruskal
and Norton', preprint.  See \url{https://arxiv.org/abs/1505.02478}.	
	

\bibitem{ehrlich} Ehrlich, Philip [2012]: `The absolute arithmetic
continuum and the unification of all numbers great and small',
\emph{The Bulletin of Symbolic Logic} \textbf{18}, no.\;1, 1--45.
\url{http://dx.doi.org/10.2178/bsl/1327328438}



\bibitem{El04} Elga, Adam [2004]: `Infinitesimal chances and the laws
of nature', \emph{Australasian Journal of Philosophy} \textbf{82},
67--76.


\bibitem{Fl19} Flynn, Darren, and Khodr Shamseddine [2019]: `On the
topological structure of a Hahn field and convergence of power
series', \emph{Indagationes Mathematicae (N.S.)}  \textbf{30} (2019),
no.\;5, 773--795.


\bibitem{fornasiero} Fornasiero, Antongiulio [2004]: `Integration on
Surreal Numbers', PhD thesis, The University of Edinburgh.
	
	


\bibitem{fremlin} Fremlin, David [2008]: \emph{Measure theory.
Vol.\;5.  Set-theoretic Measure Theory.  Part II.}  Torres Fremlin,
Colchester.
	
	

\bibitem{He07} Herzberg, Frederik [2007]: `Internal laws of
probability, generalized likelihoods and Lewis' infinitesimal
chances--a response to Adam Elga', \emph{British Journal for
Philosophy of Science} \textbf{58}, no.\;1, 25--43.


\bibitem{kaiser} Kaiser, Tobias [2018]: `Lebesgue measure and
integration theory on non-archimedean real closed fields with
archimedean value group', \emph{Proceedings of the London Mathematical
Society (3)} \textbf{116}, no.\;2, 209--247.
	
	
\bibitem{KR} Kanovei, Vladimir, and Michael Reeken [2004]:
\emph{Nonstandard Analysis, Axiomatically}.  Springer Monographs in
Mathematics, Berlin, Springer.


\bibitem{civita1} Levi-Civita, Tullio [1892]: `Sugli infiniti ed
infinitesimi attuali quali elementi analitici', \emph{Atti Ist. Veneto
di Sc., Lett. ed Art.} \emph{7a} (4), p.\;1765.
	
\bibitem{civita2} Levi-Civita, Tullio [1898]: `Sui numeri
transfiniti', \emph{Rend. Acc. Lincei} \emph{5a} (7), 91--113.
	
\bibitem{robinson civita} Lightstone, Albert Harold, and Abraham
Robinson [1975]: \emph{Nonarchimedean Fields and Asymptotic
Expansions}.  North-Holland Mathematical Library, Vol. 13.
North-Holland Publishing, Amsterdam--Oxford; American Elsevier
Publishing, New York.

	
\bibitem{modeltheory} D'Mello, Clayton, and William Weiss [2000]:
Fundamentals of Model Theory.  See
\url{http://at.yorku.ca/i/a/a/i/10.htm}
	

\bibitem{Pr18} Pruss, Alexander [2018]: `Underdetermination of
infinitesimal probabilities', \emph{Synthese}, online first at
\url{https://doi.org/10.1007/s11229-018-02064-x}

\bibitem{Ro66} Robinson, Abraham [1966]: \emph{Non-standard Analysis}.
North-Holland Publishing, Amsterdam.	


\bibitem{Ro79} Robinson, Abraham [1979]: \emph{Selected papers of
Abraham Robinson.  Vol.\;II.  Nonstandard analysis and philosophy.}
Edited and with introductions by W. A. J. Luxemburg and S. K\"orner.
Yale University Press, New Haven, Conn.


	
\bibitem{shamseddine 2010} Shamseddine, Khodr [2010]: `On the
topological structure of the Levi-Civita field', \emph{Journal of
Mathematical Analysis and Applications} \textbf{368}, no.\;1,
281--292.
	



\bibitem{shamseddine2012} Shamseddine, Khodr [2013]: `New results on
integration on the Levi-Civita field', \emph{Indagationes
Mathematicae} \textbf{24}, no.\;1, 199--211.


\bibitem{berz+shamseddine2003} Shamseddine, Khodr, and Martin Berz
[2003]: 'Measure theory and integration on the Levi-Civita field',
\emph{Contemporary Mathematics} \textbf{319}, 369--388.
	

\bibitem{shamseddine berz analysis} Shamseddine, Khodr, and Martin
Berz [2010]: `Analysis on the Levi-Civita field, a brief overview',
\emph{Contemporary Mathematics} \textbf{508}, 215--237.
	


\bibitem{So70} Solovay, Robert [1970]: `A model of set-theory in which
every set of reals is Lebesgue measurable', \emph{Annals of
Mathematics (2)} \textbf{92}, 1--56.

	
\bibitem{watt} Wattenberg, Frank [1977]: `Nonstandard measure theory.
Hausdorff measure', \emph{Proceedings of the American Mathematical
Society} \textbf{65}, no.\;2, 326--331.
	


\end{thebibliography}
\end{document}